\documentclass[12pt,twoside,a4paper]{amsart}
\usepackage{amssymb,hyperref}
\usepackage[foot]{amsaddr} 
\newcommand \Xe {\chi_\epsilon}
\newcommand \dfibar {\overline{\partial f_i}}
\newcommand \dfjbar {\overline{\partial f_j}}
\newcommand \dfJbar {\overline{\partial f_J}}
\newcommand \wbar {\overline{w}}

\newcommand \C {\mathbb{C}}
\newcommand \R {\mathbb{R}}

\newcommand \Ordo {\mathcal{O}}
\newcommand \dbar {\overline{\partial}}
\newcommand \local {\mathcal{O}_n}

\renewcommand \epsilon \varepsilon

\newtheorem{theorem}{Theorem}[section]
\newtheorem{lemma}[theorem]{Lemma}
\newtheorem{claim}[theorem]{Claim}

\newtheorem{proposition}[theorem]{Proposition}

\theoremstyle{remark}
\newtheorem{remark}[theorem]{Remark}

\numberwithin{equation}{section}

\title[]{An elementary proof of the Brian\c con-Skoda theorem}

\begin{document}

\author{Jacob Sznajdman}

\address{Mathematical Sciences, Chalmers University of Technology and G\"oteborg University\\S-412 96 G\"OTEBORG\\SWEDEN}

\email{sznajdma@chalmers.se}

\subjclass[2000]{32A10, 13B22}


\renewcommand{\abstractname}{Abstract/R\'esum\'e}
\begin{abstract}
Nous proposons une d\'emonstration \'el\'ement-aire du th\'eor\`eme de Brian\c con-Skoda.
Ce th\'eor\`eme donne un crit\`ere d'appartenance d'une fonction $\phi$ \`a un id\'eal $I$ de
l'anneau des germes de fonctions holomorphes en $0\in \C^n$; plus pr\'ecisement, l'appartenance est
\'etablie sous l'hypoth\`ese qu'une fonction d\'epend-ante de $\phi$ et $I$ soit de carr\'e localement sommable.
En partiulier, si $I$ est engendr\'e par m \'el\'ements, alors $\overline{I^{\min(m,n)}}\subset I$, o\`u 
$\overline{J}$ d\'enote la cl\^oture int\'egrale d'un id\'eal $J$.

We give an elementary proof of the Brian\c con-Skoda theorem. The theorem gives a criterion
for when a function $\phi$ belongs to an ideal $I$ of the ring of germs of analytic functions at $0\in \C^n$;
more precisely, the ideal membership is obtained if a function associated with $\phi$ and $I$ is
locally square integrable. If $I$ can be generated by $m$ elements,
it follows in particular that $\overline{I^{\min(m,n)}}\subset I$,
where $\overline{J}$ denotes the integral closure of an ideal $J$.
\end{abstract}

\maketitle

\bibliographystyle{amsalpha}

\section{Introduction}\label{historical}
Let $\local$ be the ring of germs of holomorphic functions at $0\in\C^n$.
The integral closure $\overline{I}$ of an ideal $I$ is the set of all $\phi\in\local$ such that
\begin{align}\label{closure}
	\phi^N + a_1 \phi^{N-1} + \dots + a_N = 0,
\end{align}
for some integer $N\geq 1$ and some $a_k \in I^k,\ k=1,\dots,N$.

By a simple estimate, \eqref{closure} implies that there exists a constant $C$ such that
\begin{align}\label{intolik}
	|\phi| \leq C |f|,
\end{align}
where $|f|$ is defined as $\sum |f_i|$ for any generators $f_i$ of $I$. It is easy to see that the choice of generators $f_i$
does not affect whether $\phi$ satisfies \eqref{intolik} for some $C$ or not.

Conversely, \eqref{intolik} implies that $\phi \in \overline{I}$
(however, we do not need this in the present paper), which is
a consequence of Skoda's theorem, \cite{skodal2} and a well-known determinant trick, see for example \cite{demailly}, (10.5), Ch. VIII. Another proof is given in (the republication) \cite{lejeunetessier}. 

\begin{theorem}[Brian\c con-Skoda]\label{BS}
	Let $I$ be an ideal of $\local$ generated by
	$m$ germs $f_1,\dots,f_m$. Then $\overline{I^{\min(m,n)+l-1}} \subset I^{l}$ for all integers $l\geq 1$.
\end{theorem}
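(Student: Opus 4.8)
The plan is to reduce the algebraic inclusion to a pointwise size estimate and then to read off the membership $\phi\in\ideala^{l+1}$ directly from an explicit integral representation of $\phi$ built out of the generators.

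Write $\nu=\min(m,n)$ and $|f|^2=\sum_i|f_i|^2$. The first step is the easy half of the analytic description of integral closure. If $\phi\in\overline{\ideala^{\nu+l}}$, then in the integral dependence relation \eqref{closure} the coefficients satisfy $a_k\in\ideala^{k(\nu+l)}$, hence $|a_k|\le C|f|^{k(\nu+l)}$ near $0$; comparing the terms of \eqref{closure} then yields $|\phi|\le C|f|^{\nu+l}$ on a neighbourhood of $0$. It therefore suffices to prove the following analytic statement: if $\phi\in\local$ and $|\phi|\le C|f|^{\nu+l}$, then $\phi\in\ideala^{l+1}$. This is the only point at which integral closure enters, which is what keeps the argument elementary.

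The core is a weighted Cauchy-Fantappi\`e-Koppelman formula adapted to $f$. Using Hefer's lemma I choose holomorphic $h_{ij}(\zeta,z)$ with $f_i(\zeta)-f_i(z)=\sum_j h_{ij}(\zeta_j-z_j)$ and set
$$
u=\frac{1}{2\pi i\,|f|^2}\sum_{i,j}\bar f_i\,h_{ij}\,d\zeta_j,\qquad \gamma=\frac{1}{|f|^2}\sum_i f_i(z)\bar f_i(\zeta).
$$
With $\nabla=\delta_{\zeta-z}-\dbar$, where $\delta_{\zeta-z}$ is interior multiplication by $2\pi i\sum_j(\zeta_j-z_j)\partial/\partial\zeta_j$, one checks $\delta_{\zeta-z}u=1-\gamma$, and then that $g:=\gamma+\dbar u$ satisfies $\nabla g=0$ with $\gamma|_{\zeta=z}=1$; thus $g$ is a Berndtsson-Andersson weight, smooth off $\{f=0\}$. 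Introducing a cut-off $\Xe$ that vanishes where $|f|^2<\epsilon$ and equals $1$ where $|f|^2>2\epsilon$, and wedging $g^{\nu+l}$ against the Bochner-Martinelli-Koppelman kernel $B=\sum_q B_q$ (with $B_q$ of bidegree $(q,q-1)$) over a small ball $D\ni 0$, the Koppelman formula together with $\nabla(\Xe g^{\nu+l})=-\dbar\Xe\wedge g^{\nu+l}$ gives, for $z$ near $0$,
$$
\phi(z)=\int_D \phi\,\Xe\,g^{\nu+l}\wedge B+\int_D \phi\,\dbar\Xe\wedge g^{\nu+l}\wedge B,
$$
with smooth integrands for each $\epsilon>0$.

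Expanding $g^{\nu+l}=\sum_p\binom{\nu+l}{p}\gamma^{\nu+l-p}(\dbar u)^p$, two vanishing phenomena control the sum. Differentiating the identity $\sum_i f_i\bar f_i/|f|^2\equiv1$ shows that the $m$-tuple $(\dbar(\bar f_i/|f|^2))_i$ (whose entries bring in the forms $\dfibar$) is pointwise orthogonal to $(f_i)$, so its $p$-fold wedge products, and with them $(\dbar u)^p$, vanish for $p\ge m$. Independently, a bidegree count against $B$ requires at least one factor $B_q$, forcing $p\le n-1$. Hence only $p\le\nu-1$ survive, and each surviving term carries $\gamma^{\nu+l-p}$, i.e.\ a product of at least $\nu+l-(\nu-1)=l+1$ factors $f_i(z)$. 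Collecting, the first integral takes the form $\sum_{|\alpha|=l+1}f(z)^\alpha q^\epsilon_\alpha(z)$ with each $q^\epsilon_\alpha$ holomorphic in $z$.

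It remains to let $\epsilon\to0$. The exponent $\nu+l$ is calibrated precisely so that the hypothesis $|\phi|\le C|f|^{\nu+l}$ makes the integrands uniformly integrable: the $q^\epsilon_\alpha$ converge to holomorphic germs $q_\alpha\in\local$, while the correction term $\int_D\phi\,\dbar\Xe\wedge g^{\nu+l}\wedge B$, supported in $\{\epsilon<|f|^2<2\epsilon\}$, tends to $0$, so that no residue contribution survives. Passing to the limit gives $\phi=\sum_{|\alpha|=l+1}q_\alpha f^\alpha\in\ideala^{l+1}$. I expect this final limiting step — showing that the $\dbar\Xe$ term vanishes and that the coefficient integrals converge to holomorphic functions — to be the main obstacle, since it is exactly here that the full strength of the exponent $\nu+l$, rather than merely $l+1$, must be exploited.
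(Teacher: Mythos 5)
Your overall route is the one the paper takes: reduce membership in the integral closure to the pointwise bound $|\phi|\le C|f|^{\min(m,n)+l}$, build a division formula from $\sigma=\bar f/|f|^2$ and Hefer forms, regularize with the cut-off $\Xe$, and count degrees to extract $l+1$ factors of $f(z)$. That skeleton is sound, but two essential pieces are missing. The first is the step you explicitly defer: the claim that the hypothesis ``makes the integrands uniformly integrable'' is not a calibration exercise but the actual theorem-bearing content. After the bookkeeping, a surviving term with $p$ factors of $\dbar u$ is dominated by $|f|^{-p}$ times $|\overline{\partial f_{i_1}}\wedge\dots\wedge\overline{\partial f_{i_p}}|$, hence by $|\overline{\partial f_{i_1}}\wedge\dots\wedge\overline{\partial f_{i_p}}|/(|f_{i_1}|\cdots|f_{i_p}|)$, and the local integrability of this quantity is exactly Proposition~\ref{stortlemma}. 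That is a genuinely nontrivial fact --- classically it is obtained from Hironaka's resolution of singularities, and the paper's contribution is an elementary integration-by-parts/induction proof of it. Without supplying this estimate, the dominated-convergence argument for both the limit of the coefficient integrals and the vanishing of the $\dbar\Xe$ term has no engine.

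The second gap is that your degree count fails when $m>n$. You raise $g$ to the power $\nu+l=n+l$ and argue that only $p\le\nu-1$ survives because ``a bidegree count against $B$ requires at least one factor $B_q$.'' But in the Koppelman identity the reproducing interior term is $\int_D\phi\,(g^{\nu+l})_{n,n}$, which carries no factor of $B$ at all; it contains $\binom{n+l}{n}\gamma^{l}(\dbar u)^{n}$, and when $m>n$ the $n$-fold wedge of the forms $\dbar\sigma_i$ need not vanish (only the $m$-fold one must, by the relation $f\cdot\sigma=1$). That term contributes only $l$ factors of $f(z)$ --- for $l=0$, none --- so the conclusion $\phi\in\ideala^{l+1}$ does not follow from your expansion. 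The paper's exponent $\min(m,n+1)+l$ is chosen precisely so that the no-$f(z)$ block $B_\epsilon$ vanishes off $\{|f|<2\epsilon\}$ either by linear dependence (when $\mu\ge m$) or by bidegree (when $\mu\ge n+1$); and even then the case $m>n$ requires a separate closing argument (uniform convergence plus closedness of ideals, or replacing $\ideala$ by a reduction generated by $n$ elements), because the limiting integrand is no longer integrable. A smaller but real defect: your coefficients $q^\epsilon_\alpha$ are interior integrals against the Bochner--Martinelli kernel, which is not holomorphic in $z$, so their holomorphy does not follow as stated; the paper avoids this by wedging with an auxiliary compactly supported weight that is holomorphic in $z$ near the origin, which is also what removes the boundary term absent from your representation formula.
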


As noted above, $\phi \in \overline{I^{\min(m,n)+l-1}}$ implies that $|\phi|\leq C |f|^{\min(m,n)+l-1}$.
Thus it suffices to show that any $\phi\in\local$ that satisfies this size condition belongs to $I^l$, in order to prove Theorem~\ref{BS}.

Another ideal that is common to consider is $\hat{I}^{(k)}$ which consists
of all $\phi \in \local$ such that
\begin{align}
	\int_U |\phi|^2|f|^{-2(k+\epsilon)}dV < \infty,
\end{align}
for some neighbourhood $U$ of $0\in\C^n$ and some (sufficiently small) $\epsilon > 0$, where $dV$ is the Lebesgue measure.

Lemma~\ref{weierstrass} implies that $\overline{I^k} \subset \hat{I}^{(k)}$.
The following theorem is thus a stronger version of Theorem~\ref{BS}:

\begin{theorem}\label{BSl2}
	For an ideal $I$ as in Theorem \ref{BS}, we have
	\begin{align*}
		\hat{I}^{(\min(m,n)+l-1)} \subset I^{l},
	\end{align*}
	for all integers $l \geq 1$.
\end{theorem}

In 1974 Brian\c con and Skoda, \cite{brianconskoda}, showed Theorem \ref{BSl2} as an immediate consequence of
Skoda's $L^2$-division-theorem, \cite{skodal2}. Usually Theorem \ref{BS} is the one referred to as the Brian\c con-Skoda
theorem.

An algebraic proof of Theorem~\ref{BS} was given by Lipman and Tessier in \cite{lipmantessier}. Their paper also contains a historical summary.
An account of more recent developments and an elementary algebraic proof of the result is found in 
Schoutens \cite{schoutensbs03}. 

Berenstein, Gay, Vidras and Yger~\cite{bgvy} proved Theorem~\ref{BS} for $l=1$ by finding a representation
$\phi = \sum u_i f_i$ with $u_i$ as explicit integrals. However, some of their estimates rely on Hironaka's theorem
on resolutions of singularities.

In this paper, we provide a completely elementary proof along these lines. The key point is an $L^1$-estimate
(Proposition \ref{stortlemma}), which will be used in Section~\ref{elementary}.

\textbf{Acknowledgements} I am greatful to Mats Andersson for introducing me to the subject
and providing many helpful comments and ideas. I also want to thank the referee who read
the paper very carefully and gave many valuable suggestions.

\section{The Main Estimate}\label{local}

In order to state Proposition \ref{stortlemma}, we will first recall the notion of the (standard) norm
of a differential form in $\C^n$. If $x_i$ and $y_i$, $1\leq i \leq n$, are standard coordinates for $\C^n=\R^{2n}$,
this norm is uniquely determined by demanding that the forms $dx_{i_1}\wedge \dots \wedge dx_{i_j} \wedge dy_{i_{j+1}} \wedge \dots \wedge dy_{i_k}$ constitute an orthonormal basis (over $\C$) of $\bigwedge^k T_p^*\C^n$.

\begin{proposition}\label{stortlemma}
	Let $f_1, f_2, \dots, f_m$ be generators of an ideal $I \subset \local$, and assume that
	$\phi\in \hat{I}^{(k)}$.
	Then for any integer $1\leq r \leq m$,
	\begin{equation*}
		\frac{|\phi|\cdot \left|\partial f_1 \wedge\dots\wedge \partial f_r\right|}{|f|^{k+r}}
	\end{equation*}
	is locally integrable at the origin.
\end{proposition}

\begin{remark}
Using a Hironaka resolution,
the proof of Proposition~\ref{stortlemma}
can be reduced to the case when every $f_i$ is a monomial,
and then the proof becomes much easier.
We proceed however with elementary arguments.
\end{remark}

\begin{lemma}\label{weierstrass}
	For any ideal $I = (f_1 , \dots , f_m) \neq (0)$, there is a positive number $\delta$ such that
	$1/|f|^\delta$ is locally integrable at the origin.
	\begin{proof}[Proof]
		By considering $F=f_1 \cdot f_2 \cdot \ldots \cdot f_m$
		(remove any $f_j$ that are identically zero), it suffices to show that
		$1/|F|^\delta$ is locally integrable.
		We can assume that $F$ is a Weierstrass polynomial and we consider the integral of
		$1/|F|^\delta$ on $\Omega=D \times \Delta$, where $D$ is a disk and $\Delta = D^{n-1}$.
		By choosing $D$ small enough,
		Rouch\'e's theorem gives that $F$ has the same number of roots, $s$, on each slice
		$S_p=D\times\{p\}$, $p\in \Delta$.
		We partition $S_p$ into sets $E^p_j$, one for each root $\alpha_j(p)\in S_p$,
		such that $E^p_j$ consists of those points which are closer to $\alpha_j(p)$ than to the other roots.
		We have $F(z,p)=\prod_1^s (z-\alpha_j(p))$, so on $E^p_j$ we get
		$1/|F|^\delta \leq |z-\alpha_j(p)|^{-\delta s}$.
		If $\delta$ is sufficiently small, we thus get a uniform bound for the (one variable)
		integral of $1/|F|^\delta$ on $S_p$. Fubini's theorem then gives the integrability on $\Omega$.
	\end{proof}
\end{lemma}

\begin{proof}[Proof of Proposition \ref{stortlemma}]
	We assume for the sake of simplicity that $r=m$, but the proof works for the other cases as well. 
	We begin by applying H\"older's inequality to the product of
	${|\phi|}/{|f|^{k+\delta'/2}}$ and
	${\left|\partial f_1 \wedge\dots\wedge \partial f_m\right|}/{|f|^{m-\delta'/2}}$. Assume that $\delta'$ is
	small enough to make the first factor $L^2$-integrable.
	It thus suffices to show that
	\begin{align*}   
		F=\frac{\left|\partial f_1\wedge\dots\wedge\partial f_m\right|^2}
			{\prod_1^m|f_j|^{2-\delta}}
	\end{align*}
	is locally integrable for any $\delta>0$.
	We will proceed to show that this is a consequence of
	the Chern-Levine-Nirenberg inequalities.
	The special case of these inequalities that is needed here will be proved
	without explicitly relying on facts about positive forms or plurisubharmonic functions.
	For a shorter proof of the Chern-Levine-Nirenberg inequalities, which involves these notions,
	see \cite{demailly} (3.3), Ch. III.

	Let us first set
	\begin{align*}
		\beta = \frac i2\partial \dbar |\zeta|^2 =
			\frac i2\sum d\zeta_j \wedge d\overline{\zeta_j}, \quad \text{and} \quad
		\beta_k = \frac{\beta^k}{k!}.
	\end{align*}
	Then $\beta_n$ is the Lebesgue measure $dV$. A simple argument gives that for any $(1,0)$-forms $\alpha_j$,
	\begin{align}\label{metricform}
		\frac i2\alpha_1 \wedge \overline{\alpha_1} \wedge \dots \wedge
		 \frac i2\alpha_p \wedge \overline{\alpha_p} \wedge \beta_{n-p} = |\alpha_1 \wedge\dots\wedge\alpha_p|^2dV.
	\end{align}
	Fix a sufficiently small $\delta > 0$ as in Lemma~\ref{weierstrass}. We will need at least $\delta < 2$ in the
	sequel.
	We now compute
	\begin{align*}
		\partial\dbar{(|f_j|^2+\varepsilon)}^{\delta/2} 
		=\frac{\delta}{2}
			\left(1+\frac{\left(\frac{\delta}{2}-1\right)|f_j|^2}{|f_j|^2+\varepsilon}\right)
			{(|f_j|^2+\varepsilon)}^{\delta/2-1}\partial f_j\wedge\dfjbar,
	\end{align*}
	which yields that
	\begin{align}\label{claim}
		\frac{i\partial f_j\wedge \dfjbar}{{\left(|f_j|^2+\varepsilon\right)}^{1-\delta/2}} =
			G_j i\partial\dbar{(|f_j|^2 + \varepsilon)}^{\delta/2},
	\end{align}
	where
	\begin{align*}
		G_j = \frac{2}{\delta}\left[1+\left(\frac{\delta}{2}-1\right)
				\frac{|f_j|^2}{|f_j|^2+\varepsilon}\right]^{-1}.
	\end{align*}
	Observe that
	\begin{align}\label{Gest}
		\left(\frac 2\delta\right) \leq G_j \leq \left(\frac 2\delta\right)^2.
	\end{align}
	We introduce forms $F^k_\varepsilon dV$	by setting
	\begin{align}\notag 
		F^k_\varepsilon dV &= \frac{\left|\partial f_k \wedge \dots \wedge \partial f_m\right|^2}
			{\prod_k^m\left(|f_j|^2+\varepsilon\right)^{1-\delta/2}}dV
		= \frac{\prod_k^m\left(\frac i2\partial f_j \wedge \dfjbar\right) \wedge \beta_{n+k-m-1}}
				{\prod_k^m\left(|f_j|^2+\varepsilon\right)^{1-\delta/2}}=\notag\\ \label{ddfBeta}
		&= \prod_k^m G_j \frac i2\partial\dbar \left(|f_j|^2+\varepsilon\right)^{\delta/2} \wedge \beta_{n+k-m-1}.
	\end{align}
	Note that $F^1_\varepsilon dV$ is a regularization of $FdV$.
	From the equality $|w \wedge \wbar|=2^p|w|^2$, that holds for all $(p,0)$-forms $w$, and \eqref{claim}, we get
	\begin{align}\label{ddfdV}
		F^k_\varepsilon dV = \frac{\left|\prod_k^m\left(\frac i2\partial f_j \wedge \dfjbar\right)\right| dV}
			{\prod_k^m\left(|f_j|^2+\varepsilon\right)^{1-\delta/2}} =
			\left|\prod_k^m G_j \frac i2\partial\dbar \left(|f_j|^2+\varepsilon\right)^{\delta/2}\right| dV.
	\end{align}
	Comparing \eqref{ddfBeta} with \eqref{ddfdV}, we get
	\begin{align}\label{withoutG}
		H^k_\varepsilon dV :=
		\prod_k^m i\partial\dbar \left(|f_j|^2+\varepsilon\right)^{\delta/2} \wedge \beta_{n+k-m-1} =
			\left|\prod_k^m i\partial\dbar \left(|f_j|^2+\varepsilon\right)^{\delta/2}\right| dV.
	\end{align}

	Let $B$ be a ball about the origin and let $\chi_B$ be a smooth cut-off function supported in a
	concentric ball of twice the radius. We now use \eqref{ddfdV}, \eqref{withoutG} and \eqref{Gest}
	and integrate by parts (going from the second to the third line below) to see that
	\begin{align*}
		& \int_B F^1_\varepsilon dV \leq C_\delta \int \chi_B \left|i\partial\dbar
			\left(|f_1|^2+\varepsilon\right)^{\frac{\delta}{2}} \wedge\dots\wedge
			i\partial\dbar \left(|f_m|^2+\varepsilon\right)^{\frac{\delta}{2}}\right|dV  = \\
		& =\ C_\delta \int \chi_B i\partial\dbar \left(|f_1|^2+\varepsilon\right)^{\delta/2} \wedge\dots\wedge
			i\partial\dbar \left(|f_m|^2+\varepsilon\right)^{\delta/2}\wedge \beta_{n-m}  = \\
		& =\ C_\delta \left|\int (\partial\dbar\chi_B) \left(|f_1|^2+\varepsilon\right)^{\delta/2} \wedge\dots\wedge
			i\partial\dbar \left(|f_m|^2+\varepsilon\right)^{\delta/2}\wedge \beta_{n-m}\right|  \leq \\
		& \leq\  C_1 C_\delta \sup_{2B}|f_1|^\delta \int_{2B} \left|i\partial\dbar\left
			(|f_2|^2+\varepsilon\right)^{\frac{\delta}{2}}\wedge\dots\wedge i\partial\dbar
			\left(|f_m|^2+\varepsilon\right)^{\frac{\delta}{2}}\right|dV  \leq \\
		& \leq\ C_1 C_\delta \sup_{2B}|f_1|^\delta \int \chi_{2B} H^2_\varepsilon dV, 
 	\end{align*}
	where $C_\delta = 2^m/\delta^{2m}$ and $C_1 = \sup \chi_B$. Should the reader have any doubts about the
	integration by parts, note that
	$d(\alpha \wedge \beta \wedge \gamma) =
		\partial \alpha \wedge \beta \wedge \gamma + \alpha \wedge \partial \beta \wedge \gamma$,
	for any function $\alpha$ and forms $\beta$ and $\gamma$ such that $\gamma$
	is a closed $(n-1,n-1)$-form and $\beta$ is a $(0,1)$-form. A similar relation holds for the $\dbar$-operator.
	Since the second integral on the first line in the calculation above is nothing but $\int \chi_B H^1_\varepsilon dV$,
	we can proceed by induction over $k$ to obtain
	\begin{align*}
		\int_B |F_\varepsilon|dV \leq  \frac {C}{\delta^{2m}}\sup_{2^{m+1}B}|f_1 \cdot \ldots \cdot f_m|^\delta
		<\infty,
	\end{align*}
	so if we let $\varepsilon$ tend to zero, we get the desired bound.
\end{proof}
\begin{remark}
	It is not hard to see that essentially the same proof gives that
	${\left|\partial f_1 \wedge\dots\wedge \partial f_r\right|}/{\prod_1^r|f_i|}$
	is locally integrable.
\end{remark}

\section{Division by weighted integral formulas}\label{weights}
We will use a division formula introduced in \cite{bobformula},
but for convenience, we use the formalism from \cite{matsaintrep1} to describe it.

Consider a fixed point $z\in \C^n$ and define the operator\\ $\nabla_{\zeta -z} = \delta_{\zeta - z} - \bar \partial$,
where $\delta_{\zeta-z}$ is contraction with the vector field 
\begin{align*}
	2\pi i \sum_1^n {\left(\zeta_k - z_k\right)\frac{\partial}{\partial \zeta_k}}.
\end{align*}
Recall that $\delta_{\zeta-z}$ anti-commutes with $\dbar$.
We allow these operators to act on forms of all bidegrees. In particular, the contraction of a function
is zero.

A {\em weight} with respect to $z$ is a smooth differential form
$g = g_{0,0} + g_{1,1} + \dots + g_{n,n}$ such that
$\nabla_{\zeta -z}g=0$ and $g_{0,0}(z) = 1$. The subscripts denote bidegree.

Let $s$ be any $(1,0)$-form such that $\delta_{\zeta -z} s = 1$ outside of $\{\zeta=z\}$, e.g.,
\begin{align*}
	s=\frac{\partial |\zeta|^2}{2\pi i \left(|\zeta|^2 - \overline{\zeta} \cdot z \right)},
\end{align*}
where the dot sign denotes the pairing given by $a \cdot b = \sum a_i b_i$. Next we set
\begin{align*}
	u =s + s\wedge \dbar s + \dots + s\wedge (\dbar s)^{n-1},
\end{align*}
which is defined whenever $s$ is defined.
We note that $\delta_{\zeta-z} \dbar s = - \dbar \delta_{\zeta-z} s \\= -\dbar 1 = 0$.
Since $s\wedge (\dbar s)^n$ must vanish, we have $(\dbar s)^n = \delta_{\zeta-z} (s\wedge (\dbar s)^n)$ = 0.
The reader may check that $\nabla_{\zeta - z} u = 1$. In fact, this can be seen
elegantly by using functional calculus of differential forms; then $u = s/{\nabla_{\zeta - z} s} =
s/{(1-\dbar s)} = s\wedge \sum_1^{n-1} (\dbar s)^k$,
and $\nabla_{\zeta-z} u = {\nabla s}/{\nabla s} = 1$.

One can construct a weight $g_z(\zeta)$ with respect to $z$, compactly supported in the ball of radius
$r+\epsilon$, such that $(z,\zeta) \mapsto g_z(\zeta)$ is holomorphic in $z$ in the ball of radius
$r-\epsilon$.
This is accomplished by setting
\begin{align*}
	g_z(\zeta) = \chi - \dbar \chi \wedge u,
\end{align*}
where $\chi$ is a cut-off
function that is $1$ whenever $|\zeta| \leq r-\epsilon$ and $0$ whenever $|\zeta| > r + \epsilon$. Note
that $u$ is well-defined on the support of $\dbar \chi$.
We see that $g_z$ is a weight since $\nabla_{\zeta - z}$ is an anti-derivation;
$\nabla_{\zeta - z} g_z = -\dbar \chi  + \dbar \delta_{\zeta - z} \chi \wedge u + \dbar \chi  = 0$ (as $\chi$ is a function,
we have $\delta_{\zeta - z} \chi = 0$).

\begin{proposition}
If $g$ is a weight with respect to $z$ which has compact support, and if $\phi$ is holomorphic
in a neighbourhood of the support of $g$, then
	\begin{align}\label{weightformula}
		\phi(z) = \int \phi(\zeta) g(\zeta).
	\end{align}
\begin{proof}
	As in the construction of a weight with compact support above, we define forms
	\begin{align*}
		b=\frac{\partial |\zeta-z|^2}{2\pi i|\zeta-z|^2}
	\end{align*}
	and $u=b \wedge \sum\left(\dbar b\right)^k$ such that $\delta_{\zeta -z}b = 1$ and $\nabla_{\zeta - z}u=1$
	hold outside of $\{\zeta = z\}$. The highest degree term of $u$ is the Bochner-Martinelli kernel.
	We now want to determine the residue
	$R=1-\nabla_{\zeta -z}u$ (where $\nabla_{\zeta-z}$ is taken in the sense of currents)
	at $\{\zeta = z\}$. The $(k,k-1)$ bidegree component
	$u_{k,k-1}$ of $u$ is $\Ordo(|\zeta-z|^{-2k+1})$, so only the highest component,
	$\dbar u_{n,n-1}=\dbar (b\wedge (\dbar b)^{n-1})$ of $\nabla_{\zeta -z} u$ will contribute to the residue.
	Using Stokes' theorem, it is easy to check that $R=[z]$, the point evaluation current at $z$. 
	Clearly $\nabla_{\zeta-z}(\phi g) = 0$, so $\nabla_{\zeta-z} (u\wedge \phi g) = \phi g - [z]\wedge \phi g$.
	Taking highest order terms, we get
	\begin{align*}
		d(u\wedge \phi g)_{n,n-1} = \dbar (u\wedge \phi g)_{n,n-1} = [z]\wedge \phi g_{0,0} - \phi g_{n,n} 
		=[z]\wedge \phi - \phi g_{n,n},
	\end{align*}
	so by Stokes's theorem
	\begin{align*}
		\int \phi(\zeta) g(\zeta) = \int \phi(\zeta) g_{n,n}(\zeta) = [z].\phi = \phi(z).
	\end{align*}
\end{proof}
\end{proposition}

\section{Finishing the proof of Theorem 1.2}\label{elementary}

We now begin constructing a weight associated with Berndtsson's division formula for an ideal $I\subset \local$.
Take $h=(h_i)$ to be an $m$-tuple of so called Hefer forms with respect to the generators $f_i$ of $I$;
these (germs of) $(1,0)$-forms are holomorphic in $2n$ variables, and satisfy
$\delta_{\zeta - z}h_i = f_i(\zeta) - f_i(z)$. To see that $h$ exists, write
\begin{align*}
	f_i(\zeta) - f_i(z) = \int_0^1 \frac{d}{dt} f_i(z+t(\zeta-z)) dt,
\end{align*}
and compute the derivative inside the integral.
Define $\sigma_i = \bar f_i/|f|^2$ and 
let $\chi_\varepsilon = \chi(|f|/\varepsilon)$ be a smooth cut-off function, where $\chi$ is approximatively
the characteristic function for $[1,\infty)$. Recall that the dot sign refers to the pairing $a\cdot b = \sum a_i b_i$.
We now set
\begin{align*}
	\mu=\min(m,n+1)
\end{align*}
and define the weight
\begin{align}
	\begin{split} \label{decomp}
		g_B&=\left(1-\nabla_{\zeta - z}
			\left(h \cdot \chi_\varepsilon \sigma\right)\right)^\mu \\
		& = {\left(1-\Xe + f(z) \cdot \Xe \sigma
			+ h \cdot \dbar\left(\Xe \sigma \right)\right)}^\mu = \\
		&= f(z) \cdot A_\varepsilon + B_\varepsilon,
	\end{split}
\end{align}
where
\begin{align}
	\label{Apart}
	A_\varepsilon& = \sum_{k = 0}^{\mu - 1} C_k
		\Xe \sigma {\left[f(z) \cdot \Xe \sigma\right]}^k
			{\left[1-\Xe + h \cdot \dbar\left(\Xe \sigma \right)\right]}^{\mu -k -1}
	\intertext{and}
	\label{Bpart}
	B_\varepsilon & =  {\left(1-\Xe+h\cdot\dbar\left(\Xe\sigma\right)\right)}^\mu.
\end{align}

For convenience, we assume that $l=0$ in Theorem \ref{BSl2}. The proof goes through verbatim for general $l$ by
just replacing $\mu$ with $\mu + l$ in the definition of $g_B$.

Let $g$ be any weight with respect to $z$ which has compact support and is holomorphic in $z$ near $0$.
Substitution of the last line of \eqref{decomp} into \eqref{weightformula} applied to the weight $g_B\wedge g$ yields
\begin{align}\label{AandBpart}
	\phi(z) = f(z) \cdot \int{\phi(\zeta)A_\varepsilon \wedge g} + \int{\phi(\zeta) B_\varepsilon \wedge g}.
\end{align}
To obtain the division we will show two claims:
\begin{claim}\label{residydod}
	The second term in \eqref{AandBpart},
	\begin{align*}
		\int{\phi(\zeta) B_\varepsilon \wedge g},
	\end{align*}
	converges uniformly to zero for small $|z|$.
\end{claim}
\begin{claim}\label{intconv}
	If $m \leq n$, the tuple of integrals in \eqref{AandBpart},
	\begin{align*}
		\int{\phi(\zeta)A_\varepsilon \wedge g},
	\end{align*}
	converges uniformly as $\varepsilon \to 0$.
\end{claim}
We give an argument for the case $m > n$ of Theorem~\ref{BSl2} at the end of the paper.
Letting $\varepsilon$ go to zero in \eqref{AandBpart}, these claims
give that $\phi \in I$.

To prove Claim \ref{residydod}, we will soon find a function $F(\zeta)$ integrable near $\zeta=0$,
such that $|\phi(\zeta) B_\varepsilon|\leq F$.
Now we note that the integrand of Claim~\ref{residydod} has support on the set $S_\epsilon = \{|f|\leq 2\epsilon\}$;
outside of $S_\varepsilon$, we have that $\Xe =1$, so
$B_\epsilon = \left(h\cdot \dbar \sigma\right)^{\mu}$, which vanishes
regardless of whether $\mu=n+1$ or $\mu = m$. In the latter case apply $\dbar$ to
$f\cdot \sigma = 1$ to see that $\dbar \sigma$ is linearly dependent. 
Thus for small $|z|$, we get
\begin{align*}
	\lim_{\varepsilon \to 0}\left|\int{\phi(\zeta) B_\varepsilon \wedge g}\right|
		\leq C \lim_{\varepsilon \to 0}\int_{S_\epsilon} F = 0,
\end{align*}
where we used that $g$ is smooth.

The existence of $F$ is a consequence of the main estimate of the previous chapter
and a little bookkeeping that we will now carry out.
Straightforward calculations, based on the fact that $\chi'$ is bounded, give that
\begin{align}
	\label{dbarZe}
	\dbar \Xe = \Ordo(1)|f|^{-1}\sum\dfjbar \quad \text{and} \quad
	\dbar \sigma_i = \Ordo(1)|f|^{-2}\sum\dfjbar,
\end{align}
since $|f|\sim \varepsilon$ on the support of $\dbar \Xe$. Note also that $|\sigma|=|f|^{-1}$.
It is easy to see that $\Ordo(1)$ actually represents a function that does not depend on $\epsilon$.

Using these facts, as we binomially expand \eqref{Bpart}, we get that $\phi(\zeta) B_\varepsilon$ is a linear combination of
terms that are given by
\begin{align}\label{terms}
	&\phi(\zeta){\left(\dbar \Xe h \cdot \sigma\right)}^a \wedge
	{\left(\Xe h \cdot \dbar \sigma\right)}^b{\left(1 - \Xe\right)}^c =\notag\\&=
	\phi(\zeta) |f|^{-2(a+b)} \dfJbar \wedge \Ordo(1),	
\end{align}
where $a+b+c=\mu$, $J \subset \{1,2 \dots m\}$, $|J|=a+b$ and
$\dfJbar = \underset{i \in J}{\bigwedge} \dfibar$. Since $\dfJbar=0$ whenever $a+b>n$ we can assume that
$a+b \leq \min(m,n)$. We now set $F$ to be the sum of the right hand side of
\eqref{terms} over all possible $J$, i.e.
\begin{align}\label{F}
	F=\sum_{|J|\leq \min(m,n)} \phi(\zeta)|f|^{-2|J|} \dfJbar \wedge \Ordo(1).
\end{align}
Clearly $|\phi(\zeta) B_\varepsilon|\leq F$. Applying Proposition~\ref{stortlemma} with $k=\min(m,n)$ to
\eqref{F}, it follows that $F$ is indeed locally integrable. \qed\\

Before dealing with Claim \ref{intconv}, we note that there is a way around it;
clearly, the integrals in the claim are holomorphic for each $\varepsilon >0$, so the first term
in \eqref{AandBpart} belongs to $I$ for fixed $\epsilon>0$.
Thus, due to Claim~\ref{residydod}, $\phi$ is in the closure of $I$ with respect to uniform convergence.
All ideals are however closed under uniform convergence, see \cite{hormander} Chapter 6, so $\phi$ belongs to $I$.

The proof of Claim \ref{intconv} is similar to the proof of Claim~\ref{residydod}.
Since we have assumed $m\leq n$, we have $\mu=\min(m,n+1)=m$.
Expanding $\phi(\zeta)A_\varepsilon$, displayed in \eqref{Apart}, we get a linear combination of terms that are given by
\begin{align*}
	&\phi(\zeta) \sigma {\left(f(z) \cdot \Xe \sigma\right)}^k
	{\left(\dbar\Xe h \cdot \sigma \right)}^a \wedge {\left(h \cdot \dbar \sigma \right)}^b
	=\\&= \phi(\zeta)|f|^{-(1+k+2a+2b)}\dfJbar\wedge \Ordo(1), 
\end{align*}
where $a+b \leq \mu - k -1$, $k \leq \mu - 1$ and $|J| = a+b$.
The sum $1+k+2a+2b$ is at most $2\mu - 1$, and this happends when $k=0$ and $a+b=\mu - 1$.
By an argument almost identical to the one proving that
$F$ was integrable, we get an integrable upper bound for $\phi A_\epsilon$ independent of $z$ and $\epsilon$.
This is, of course, an upper bound also for the limit
\begin{align*}
	A:=\lim_{\varepsilon \to 0}A_\varepsilon =
		\sum_{k=0}^{\mu-1}
		C_k\sigma{\left[f(z)\cdot\sigma\right]}^k{\left[h\cdot\dbar\sigma\right]}^{\mu-k-1}.
\end{align*}
As in the beginning of the proof of Claim \ref{residydod},
one sees that $\int{\phi(\zeta)A_\varepsilon \wedge g}$
converges uniformly to $\int{\phi(\zeta)A \wedge g}$. \qed\\

The case $m > n$ presents an additional difficulty as our upper bound fails to be integrable. Also, $\phi A\wedge g$ will not be integrable. A remedy is to consider a reduction of the ideal $I$,
that is, an ideal $\mathfrak{a} \subset I$ generated by $n$ germs such that $\overline{\mathfrak{a}} = \overline{I}$,
see for example Lemma 10.3, Ch. VIII in \cite{demailly}. If $a_i$ generate $\mathfrak{a}$ we have that $|a| \sim |f|$,
so $\hat{\mathfrak{a}}^{(k)} = \hat{I}^{(k)}$ for any integer $k \geq 1$. Thus we have reduced to the case
$m \leq n$, which has already been proved.
\qed

\begin{thebibliography}{Dem07}

\bibitem[A03]{matsaintrep1}
M.~Andersson, \emph{Integral representation with weights {I}}, Math. Ann. 326,
  1-18 (2003).

\bibitem[BGVY93]{bgvy}
C.~Berenstein R. Gay A. Vidras~A. Yger, \emph{Residue currents and bezout
  identities}, Progress in Mathematics, 114, Birkh{\"a}user Verlag, Basel,
  1993.

\bibitem[B83]{bobformula}
B.~Berndtsson, \emph{A formula for division and interpolation}, Math. Ann. 263,
  113-160 (1983).

\bibitem[BS74]{brianconskoda}
J.~Brian{\c c}on~H. Skoda, \emph{Sur la cl\^oture int\'egrale d'un id\'eal de
  germes de fonctions holomorphes en un point de {$C^n$}}, C. R. Acad. Sci.
  Paris S\'er. A 278, 949-951 (1974).

\bibitem[D07]{demailly}
J-P. Demailly, \emph{Complex analytic and differential geometry}, Available at
  \url{http://www-fourier.ujf-grenoble.fr/~demailly/}, 2007.

\bibitem[H90]{hormander}L.~H\"{o}rmander, \emph {An introduction to complex analysis in several variables,}
{North-Holland, 0444884467 (1990).}

\bibitem[LTR08]{lejeunetessier}M.~Lejeune-Jalabert~B.~Tessier~and~J-J.~Risler, \emph {Cl\^oture int\'egrale des id\'eaux et \'equisingularit\'e,}
Ann. Toulouse S\'er. 6, 17 no. 4 , 781-859, available at arXiv:0803.2369 (2008). 
    

\bibitem[LT81]{lipmantessier}
J.~Lipman~B. Tessier, \emph{Pseudo-rational local rings and a theorem of
  {Brian\c con-Skoda} about integral closures of ideals}, Michigan Math. J. 28,
  97-115 (1981).

\bibitem[Sc03]{schoutensbs03}
H.~Schoutens, \emph{A non-standard proof of the {Brian\c con-Skoda} theorem},
  Proc. Amer. Math. Soc. 131, 103-112 (2003).

\bibitem[S72]{skodal2}
H.~Skoda, \emph{Application des techniques {$L^2$} \`a la th\'eorie des
  id\'eaux d'une alg\'ebre de fonctions holomorphes avec poids}, Ann. Sci.
  \'Ecole Norm. Sup. (4) 5, 545-579 (1972).

\end{thebibliography}
\providecommand{\bysame}{\leavevmode\hbox to3em{\hrulefill}\thinspace}
\providecommand{\MR}{\relax\ifhmode\unskip\space\fi MR }
\providecommand{\MRhref}[2]{%
  \href{http://www.ams.org/mathscinet-getitem?mr=#1}{#2}
}
\providecommand{\href}[2]{#2}

\end{document}